\numberwithin{equation}{section}
\newtheorem{theorem}{Theorem}[section]
\newtheorem{proposition}{Proposition}[section]
\newtheorem{example}{Example}[section]
\newtheorem{remark}{Remark}[section]
\newtheorem{definition} {{Definition}}[section]
\def\R{{\mathbb R}}
\newcommand{\be} {\begin{equation}}
\newcommand{\ee} {\end{equation}}
\newcommand{\bea} {\begin{eqnarray}}
\newcommand{\eea} {\end{eqnarray}}
\newcommand{\Bea} {\begin{eqnarray*}}
\newcommand{\Eea} {\end{eqnarray*}}
\newcommand{\esssup}{\mathop{\rm {ess\,sup}}\limits}    
\newcommand{\essinf}{\mathop{\rm {ess\,inf}}\limits}
\begin{document}

\title{\bf Pairs of fixed points for a class \\ of operators on Hilbert spaces}
\date{}
\maketitle

\begin{center}
{\sc Abdelhak Mokhtari$^{a,b}$, Kamel Saoudi$^{c,d}$, Du\v{s}an D. Repov\v{s}$^{e,f,g,}$\footnote{Corresponding author: Du\v{s}an D. Repov\v{s}}}
\end{center}

\vspace{0.6cm}

{\tiny
\begin{center}
$^a$Mathematics Department, Faculty of Mathematics and Informatics, University of M'sila, Algeria.\\
$^b$Laboratory of Fixed Point Theory and Applications, Department of Mathematics, E.N.S. Kouba, Algiers, Algeria; Email address: abdelhak.mokhtari@univ-msila.dz \\[0.3cm]
$^c$College of Sciences at Dammam, University of Imam Abdulrahman Bin Faisal, 31441 Dammam, Kingdom of Saudi Arabia \\
$^d$Basic and Applied Scientific Research Center, Imam Abdulrahman Bin Faisal University, P.O. Box 1982, 31441, Dammam, Kingdom of Saudi Arabia; Email address: kmsaoudi@iau.edu.sa\\[0.3cm]
$^e$Faculty of Education, University of Ljubljana, 1000 Ljubljana, Slovenia.  \\
$^f$Faculty of Mathematics and Physics, University of Ljubljana, 1000 Ljubljana, Slovenia.  \\
$^g$Institute of Mathematics, Physics and Mechanics, 1000 Ljubljana, Slovenia; Email address: dusan.repovs@guest.arnes.si
\end{center}

\vspace{0.3cm}

\textbf{Abstract.}
In this paper, existence of pairs of solutions is obtained for compact potential operators on Hilbert spaces.  An application to a second-order
boundary value problem is also given as an illustration of our results. 
 
\textbf{Keywords and Phrases}:  Hilbert space, potential operator, genus, fixed point theorem, boundary value problem.

\textbf{2020 Mathematics Subject Classification:} 35J25, 47G40, 47H10.
 }

\section{Introduction}\label{s1}
There are many papers that study the existence of fixed points for different types of operators, among the most important of these, the  potential operator (or gradient operator)  which can be regarded as the G\^{a}teaux derivative of a suitable functional. Potential operators arise in many steady-state phenomena in physical problems  from quantum mechanics, e.g. the potential of the Hamiltonian operator in the Schr\"{o}dinger equation, see \cite{3,1,2}.

In fact, the variational methods to study linear and nonlinear equations are fully based on potential operators (see \cite{bouc, BMT2016, CS2011, CS2013, Ric2012}). In \cite{CS2011}, the authors considered nonlinear mappings $\phi\in C^1(H,\mathbb{R})$ defined on a Hilbert space $H,$ ordered by a cone $P$ and such that $\phi$ satisfies the Palais-Smale condition  and has the expression
 \begin{equation}\label{rr}
 \phi '= I-A.
 \end{equation}
 When $A$ satisfies some growth conditions,  $A$ was shown
  to have a fixed point. A combination of topological and variational methods were used and an application to a second-order dynamic equation was given.

In \cite{CS2013}, the authors discussed the existence of  fixed points for a class of nonlinear operators on Hilbert spaces with lattice structure, by a combination of variational and partial ordered methods and they gave an application to second-order ordinary differential equations.
In \cite{TWY2013}, the authors proved fixed point theorems for a widely more generalized hybrid non-self mappings on a Hilbert space. Using these results, they were able to prove the Browder-Petryshyn fixed point theorem \cite{BP1967} for strict pseudo-contractive non-self mappings and also  generalized the fixed point theorem from
 \cite{KTY2010} to the super hybrid non-self mappings.

Motivated by these previous works, we are concerned
in this paper
 with the existence of pairs of fixed points on a Hilbert space $H$ for an odd compact potential operator $A: H\longrightarrow H,$ satisfying the following sublinear growth condition
$$
(\mathcal{H})\quad \textrm{There exists}\;\theta\in[0,1)\,\mbox{ such that }\,\limsup_{\|u\|\rightarrow+\infty}\frac{\|Au\|}{\|u\|^{\theta}}<\infty.
$$
When $\theta=1$, this condition is known as the quasiboundedness of $A$. If $$\limsup\limits_{\|u\|\rightarrow+\infty}\frac{\|Au\|}{\|u\|^{\theta}}<1,$$ then existence of the fixed point of $A$ is guaranteed by an application of
Rothe's theorem (see, e.g., \cite{granas}).

 The proofs of our results are based on the critical point theory. In particular, we apply Clark's theorem on a functional associated with the operator $A$. We choose it in an appropriate way so that its critical points are the same as the fixed points of $A,$ as shown by
  relation (\ref{rr}). This method guarantees that the critical points on the boundary of the ball, that
  is different from the origin $0_H$, which assures that the associated fixed points
 are nontrivial, which Schauder's theorem does not guarantee (see the next section).

This method requires the existence of linear operator and a set of unit vectors satisfying some properties. Moreover, this method naturally leads to multiplicity results which are rather difficult to achieve with classical fixed point theorems. We point out that Clark's theorem is often applied to prove the existence and multiplicity of weak solutions of boundary problems (see, e.g., \cite{Corre2,MMO2015,MMOT2016}), but we belive that our work is among the first to apply it in the fixed point theory.
Our main existence theorems are applied to a Dirichlet boundary value problems associated to a second-order ordinary differential equation. This simple model was chosen only to illustrate the effectiveness of the new fixed point theorems.

Our first existence result reads as follows.
\begin{theorem}\label{thepf}
Let $H$ be a Hilbert space and $A: H\rightarrow H$  an odd compact potential operator satisfying assumption $(\mathcal{H})$. Assume  that there exist a linear operator $B_1$ on $H$, $r_1>0,$ and  $e_1\in H,$ with $\|e_1\|=1,$ such that
\begin{itemize}
\item[$(\mathcal{H}1)$] $\big(B_1(e_1),e_1\big)>1$,
\item[$(\mathcal{H}2)$] $\big(A(sr_1e_1),r_1e_1\big)\geq sr_1^2\big(B_1(e_1),e_1\big)$,  for all $s\in(0,1)$.
\end{itemize}
Then  operator $A$ has a pair of fixed points in $H\setminus\{0\}$.
\end{theorem}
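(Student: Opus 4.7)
The plan is to convert the fixed-point problem for $A$ into a variational one and then apply Clark's theorem on multiplicity of critical points. Since $A$ is a potential operator, there exists $\phi\in C^1(H,\RR)$ with $\phi'=A$; normalizing so that $\phi(0)=0$ gives the explicit representation $\phi(u)=\int_0^1\big(A(tu),u\big)\,dt$. I would then introduce $\Phi(u)=\frac12\|u\|^2-\phi(u)$, chosen precisely so that $\Phi'(u)=u-A(u)$ as in (\ref{rr}); the critical points of $\Phi$ are therefore exactly the fixed points of $A$. Oddness of $A$, combined with the integral formula, makes $\phi$ (and hence $\Phi$) even, supplying the symmetry that Clark requires.

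Next I would verify the analytic hypotheses of Clark's theorem for $\Phi$. The sublinear growth assumption $(\mathcal{H})$ produces constants with $\|A(u)\|\leq C_1\|u\|^\theta+C_2$ for every $u\in H$, and inserting this into the representation of $\phi$ yields $|\phi(u)|\leq C_1'\|u\|^{\theta+1}+C_2'\|u\|$. Because $\theta+1<2$, the quadratic term $\frac12\|u\|^2$ dominates, so $\Phi$ is coercive and bounded below. For the Palais--Smale condition, any (PS) sequence $(u_n)$ is bounded by coercivity; since $A$ is compact, a subsequence of $\bigl(A(u_n)\bigr)$ converges, and the identity $u_n=\Phi'(u_n)+A(u_n)$ then transfers strong convergence to $(u_n)$ itself.

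The geometric input is a symmetric compact subset $K\subset H\setminus\{0\}$ of Krasnoselskii genus at least $1$ on which $\Phi<0$. I would take $K=\{+r_1 e_1,\,-r_1 e_1\}$. Applying $(\mathcal{H}2)$ pointwise inside the integral formula gives $\phi(r_1 e_1)=\int_0^1\big(A(sr_1 e_1),r_1 e_1\big)\,ds\geq\int_0^1 sr_1^2\big(B_1(e_1),e_1\big)\,ds=\frac{r_1^2}{2}\big(B_1(e_1),e_1\big)$, and then $(\mathcal{H}1)$ yields $\Phi(\pm r_1 e_1)=\frac{r_1^2}{2}-\phi(r_1 e_1)\leq \frac{r_1^2}{2}\bigl(1-\big(B_1(e_1),e_1\big)\bigr)<0$. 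Clark's theorem then produces a pair $\pm u^\ast\in H\setminus\{0\}$ of critical points of $\Phi$, which are exactly the required pair of fixed points of $A$.

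The main obstacle I anticipate is bookkeeping rather than conceptual: producing honest constants in the sublinear bound so that coercivity and (PS) are cleanly established, and making sure the normalization of $\phi$ yields a well defined, even functional. The decisive computation that $\Phi<0$ on $\{\pm r_1 e_1\}$ is essentially a direct translation of $(\mathcal{H}1)$--$(\mathcal{H}2)$ into the hypothesis of Clark's theorem, so once the analytic framework is in place the conclusion follows immediately.
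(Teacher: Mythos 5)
Your proposal is correct and follows essentially the same route as the paper: the functional $\Phi(u)=\frac12\|u\|^2-\int_0^1(A(su),u)\,ds$, coercivity and (PS) from $(\mathcal{H})$ plus compactness, the genus-one set $K=\{\pm r_1e_1\}$ with $\sup_K\Phi<0$ via $(\mathcal{H}1)$--$(\mathcal{H}2)$, and Clark's theorem. No substantive differences.
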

Our second existence result reads as follows.
\begin{theorem}\label{theo2}
Let $H$ be a Hilbert space and $A: H\rightarrow H$  an odd compact potential operator satisfying assumption $(\mathcal{H}).$
Assume that there exists a linear self-adjoint operator $B_2$ on $H$ such that
\begin{itemize}
  \item[$(\mathcal{H}1)'$] there exist $e_2, e_3\in H$ with $\|e_i\|=1,\;i=2,3,$ and $(e_2,e_3)=0,$ satisfying
$$
\left\{
\begin{array}{ll}
\big(B_2(e_i),e_i\big)>1, \quad i=2,3.\\
\big(B_2(e_2),e_3\big)^2-\Big(1-\big(B_2(e_2),e_2\big)\Big)\Big(1-\big(B_2(e_3),e_3\big)\Big)<0,
\end{array}
\right.
$$
\item[$(\mathcal{H}2)'$] there exists a constant $r_2>0$ satisfying
$$
\big(A(su),u\big)\geq\big(B_2(su),u\big), \ \hbox{for all} \ \,u\in\partial \mathcal{B}(0,r_2)\cap<e_2,e_3>,\; \ \hbox{for all} \ \,s\in(0,1),
$$
where $\partial\mathcal{B}(0,r_2)$ denotes the boundary of the ball $\mathcal{B}(0,r_2)$ and $<e_2,e_3>$ is the subspace spanned by the vectors $e_2$ and $e_3$.
\end{itemize}
Then operator $A$ has two pairs of fixed points in $H\setminus\{0\}$.
\end{theorem}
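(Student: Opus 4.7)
\medskip

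\noindent\textbf{Proof proposal.} The plan is to mimic the strategy already sketched for Theorem~\ref{thepf}, but this time applying a two-dimensional version of Clark's theorem so that the genus argument yields two (rather than one) pairs of nontrivial critical points. Since $A$ is a potential operator, there exists $F\in C^{1}(H,\RR)$ with $F'=A$ and $F(0)=0$, and we set
$$
\phi(u)=\tfrac12\|u\|^{2}-F(u),\qquad \phi'(u)=u-A(u).
$$
Thus the fixed points of $A$ are exactly the critical points of $\phi$. Because $A$ is odd we have $F$ even, hence $\phi$ is even with $\phi(0)=0$. Using the sublinear growth assumption $(\mathcal H)$, the inequality $F(u)=\int_0^1(A(tu),u)\,dt\le C(1+\|u\|^{\theta+1})$ with $\theta<1$ gives the lower bound $\phi(u)\ge \tfrac12\|u\|^2-C(1+\|u\|^{\theta+1})$, so $\phi$ is bounded below and coercive. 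Compactness of $A$ together with coercivity yields the Palais--Smale condition in the standard way (any (PS) sequence is bounded, hence has a weakly convergent subsequence, and $A u_n$ converges strongly, so $u_n=Au_n+\phi'(u_n)\to\!\!$ strongly).

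The decisive step is to produce a two-dimensional symmetric set of negative energy, so that Clark's theorem (in the genus form, e.g.\ Rabinowitz's version) delivers at least two pairs of nontrivial critical points. The natural candidate is the circle
$$
S=\partial\mathcal B(0,r_2)\cap\langle e_2,e_3\rangle,
$$
which is closed, symmetric, and has Krasnosel'ski\u\i{} genus $\gamma(S)=2$. For $u\in S$ write $u=ae_2+be_3$ with $a^{2}+b^{2}=r_2^{2}$. Using $(\mathcal H 2)'$ and the identity $F(u)=\int_0^1(A(su),u)\,ds$, one estimates
$$
F(u)\ge\int_0^1(B_2(su),u)\,ds=\tfrac12(B_2u,u),
$$
whence $\phi(u)\le \tfrac12\bigl[\|u\|^{2}-(B_2u,u)\bigr]$. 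Since $B_{2}$ is self-adjoint and $(e_{2},e_{3})=0$, expanding gives
$$
(B_{2}u,u)-\|u\|^{2}=a^{2}\bigl((B_{2}e_{2},e_{2})-1\bigr)+2ab\,(B_{2}e_{2},e_{3})+b^{2}\bigl((B_{2}e_{3},e_{3})-1\bigr).
$$
Condition $(\mathcal H 1)'$ says precisely that the two diagonal coefficients are strictly positive and that the determinant $\bigl((B_2e_2,e_2)-1\bigr)\bigl((B_2e_3,e_3)-1\bigr)-(B_2e_2,e_3)^2$ is strictly positive, i.e.\ this quadratic form in $(a,b)$ is positive definite. Hence there exists $\mu>0$ with $(B_{2}u,u)-\|u\|^{2}\ge\mu r_2^{2}$ on $S$, and consequently $\sup_{S}\phi\le-\tfrac{\mu}{2}r_2^{2}<0$.

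Having verified that $\phi$ is even, of class $C^{1}$, bounded below, satisfies (PS), $\phi(0)=0$, and admits a symmetric set $S$ of genus $2$ on which $\sup\phi<0$, Clark's theorem produces at least two pairs of critical points $\pm u_1,\pm u_2$ of $\phi$ with $\phi(u_i)<0$, in particular $u_{i}\neq 0$. Because $\phi'=I-A$, these four points are pairwise fixed points of $A$ in $H\setminus\{0\}$, giving the two pairs claimed.

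The part requiring the most care is the genus count: I must invoke the version of Clark's theorem that yields $k$ pairs of critical points (not just one) from a set of genus $k$, so that the two-dimensionality of $\langle e_2,e_3\rangle$ is actually exploited. The algebraic verification that $(\mathcal H 1)'$ is exactly the positive-definiteness condition of the quadratic form, and the derivation of the (PS) property and boundedness from below from $(\mathcal H)$, are the only delicate verifications, and both are standard once set up as above.
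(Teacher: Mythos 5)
Your proposal is correct and follows essentially the same route as the paper: the same functional $J(u)=\tfrac12\|u\|^2-\int_0^1(A(su),u)\,ds$, the same genus-$2$ circle $\partial\mathcal B(0,r_2)\cap\langle e_2,e_3\rangle$, the same use of $(\mathcal H2)'$ and the quadratic-form reading of $(\mathcal H1)'$ to get $\sup J<0$ there, and the same appeal to Clark's theorem. If anything, your explicit Sylvester-criterion verification that $(\mathcal H1)'$ is exactly positive-definiteness of $(B_2u,u)-\|u\|^2$ on the plane is slightly more detailed than the paper's "the right-hand side is strictly negative."
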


We complete the introduction by an outline of the paper.
In Section \ref{s2}, we collect the necessary preliminary material.
In Section \ref{s3}, we prove the first existence result (Theorem \ref{thepf}).
In Section \ref{s4}, we prove the second existence result (Theorem \ref{theo2}).
Finally, in Section \ref{s5},  we give an application to a second-order
boundary value problem to illustrate our results.

\section{Preliminaries}\label{s2}

Let $E$ be a real Banach space and $\Sigma$ the class of all closed subsets $F\subset E\setminus\{0\}$ that are symmetric with respect
to the origin, i.e.,  $u\in F$ implies $-u\in F$.

\begin{definition}
Let $F\in\Sigma$. The Krasnosel'skii genus $\gamma(F)$ is defined as the least positive integer $n$ such that there is
an odd mapping $\varphi\in C(F, \mathbb{R}^n\setminus\{0\})$. If such $n$ does not exist, then we set $\gamma(F)=+\infty$. Moreover, by
definition, $\gamma(\emptyset)=0$.
\end{definition}

Next we shall present a result on the computation of the genus that will be used in this work.

\begin{proposition}\label{propo} (see \cite{rabin})
Let $F\subset E$, $\Omega$ be a bounded neighborhood of $0$ in $\mathbb{R}^N$, and assume that there exists an odd homeomorphism
$h\in C(F,\partial\Omega)$. Then $\gamma(F)=N$.
\end{proposition}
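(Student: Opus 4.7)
The plan is to prove the equality $\gamma(F) = N$ by establishing the two inequalities $\gamma(F) \le N$ and $\gamma(F) \ge N$ separately. First I record two preliminary observations: since $\Omega$ is a neighborhood of $0$ in $\mathbb{R}^N$, one has $0 \in \Omega$ and therefore $0 \notin \partial\Omega$, so $\partial\Omega \subset \mathbb{R}^N \setminus \{0\}$; and since $F \in \Sigma$ is symmetric and $h$ is odd, the image $\partial\Omega = h(F)$ is also symmetric with respect to the origin.

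For the upper bound, the odd homeomorphism $h$ itself is an odd continuous map from $F$ into $\mathbb{R}^N \setminus \{0\}$, so by the very definition of the Krasnosel'skii genus one obtains $\gamma(F) \le N$.

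For the lower bound I would argue by contradiction: assume $\gamma(F) \le N-1$, so that there exists an odd continuous map $\varphi : F \to \mathbb{R}^{N-1} \setminus \{0\}$. Composing with $h^{-1}$ yields an odd continuous map $\psi := \varphi \circ h^{-1} : \partial\Omega \to \mathbb{R}^{N-1} \setminus \{0\}$. I would then invoke the Borsuk--Ulam theorem in its version for bounded symmetric open neighborhoods of $0$: if $V \subset \mathbb{R}^N$ is such a neighborhood, then every odd continuous map $\partial V \to \mathbb{R}^{N-1}$ must vanish somewhere. Since $\partial\Omega$ is symmetric but $\Omega$ itself need not be, I would pass to the auxiliary set $V := \Omega \cap (-\Omega)$, which is a bounded open symmetric neighborhood of $0$, and whose boundary satisfies $\partial V \subset \partial\Omega \cup \partial(-\Omega) = \partial\Omega$, using the symmetry of $\partial\Omega$. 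The restriction $\psi|_{\partial V}$ would then produce an odd continuous map into $\mathbb{R}^{N-1} \setminus \{0\}$, contradicting the above form of Borsuk--Ulam applied to $V$.

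The main obstacle is this final Borsuk--Ulam step: the classical statement concerns the sphere $S^{N-1}$, whereas here one needs the more general version for boundaries of bounded symmetric open neighborhoods of $0$. This extension is a standard consequence of Brouwer degree theory, via Borsuk's theorem on the oddness of the degree of an odd map on a symmetric domain, and is precisely the kind of ingredient available in \cite{rabin}.
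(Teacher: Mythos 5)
The paper offers no proof of this proposition, only the citation to \cite{rabin}; your argument is correct and is essentially the standard one from that source: $\gamma(F)\le N$ comes directly from the definition using $h$ itself, and $\gamma(F)\ge N$ by contradiction, transporting a putative odd map $\varphi\in C(F,\mathbb{R}^{N-1}\setminus\{0\})$ to $\varphi\circ h^{-1}$ on $\partial\Omega$ and invoking the Borsuk--Ulam theorem. Your additional step of passing to the symmetric neighborhood $\Omega\cap(-\Omega)$ --- legitimate because $\partial\Omega=h(F)$ is symmetric, so $\partial\bigl(\Omega\cap(-\Omega)\bigr)\subset\partial\Omega$ --- correctly handles the fact that $\Omega$ itself is not assumed symmetric, a point the textbook treatment glosses over, so no gap remains.
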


More details on the genus can be found  in \cite{ambro,castro,kavian,krasno}.

\begin{definition}
Let $J\in C^1(E,\mathbb{R})$. If any sequence $(u_n)\subset E$ for which $(J(u_n))$ is bounded and $J'(u_n)\rightarrow 0,$ as $n\rightarrow +\infty$ in $E',$ possesses a convergent subsequence, then we say that $J$ satisfies the Palais-Smale condition (denoted by (PS) condition).
\end{definition}

The following theorem, due to Clark  \cite{clark}, will be crucial in the proof of our existence results.

\begin{theorem}\label{clarke}
Let $J\in C^1(E,\mathbb{R})$ be a functional satisfying the (PS) condition. Assume further that:
  \item [(a)] $J$ is even and bounded from below,
  \item [(b)] there is a compact set $K\in\Sigma$ such that $\gamma(K)=k$ and $\sup\limits_{x\in K}J(x)<J(0)$.\\
Then $J$ possesses at least $k$ pairs of distinct critical points and their corresponding critical values are less than $J(0)$.
\end{theorem}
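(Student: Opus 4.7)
The plan is to apply Clark's theorem (Theorem 2.1) with $k=2$ to the functional $J:H\to\RR$ defined by $J(u)=\tfrac12\|u\|^2-F(u)$, where $F$ is the (unique, with $F(0)=0$) potential of $A$, i.e., $F(u)=\int_0^1(A(tu),u)\,dt$. Since $A=\nabla F$, we have $J'(u)=u-A(u)$, so every critical point of $J$ is a fixed point of $A$ and vice versa. Because $A$ is odd, $F$ is even and hence so is $J$. The sublinear growth hypothesis $(\mathcal{H})$ gives $\|A(u)\|\leq C(1+\|u\|^\theta)$ with $\theta<1$, which yields $|F(u)|\leq C'(1+\|u\|^{\theta+1})$ and hence $J(u)\to+\infty$ as $\|u\|\to\infty$; in particular $J$ is bounded below. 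The (PS) condition follows in the standard way from the compactness of $A$: a (PS) sequence $(u_n)$ is bounded (again by $(\mathcal{H})$ and the coercivity of $J$), so along a subsequence $A(u_n)$ converges, and then $u_n=J'(u_n)+A(u_n)$ converges as well.

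The key geometric step is to construct a compact symmetric set $K\in\Sigma$ with $\gamma(K)=2$ on which $\sup_K J<J(0)=0$. Let $V=\langle e_2,e_3\rangle$, a two-dimensional subspace of $H$, and set
\[
K=\partial\mathcal{B}(0,r_2)\cap V=\{\alpha e_2+\beta e_3 : \alpha^2+\beta^2=r_2^2\}.
\]
Since $(e_2,e_3)=0$ and $\|e_2\|=\|e_3\|=1$, the map $(\alpha,\beta)\mapsto\alpha e_2+\beta e_3$ is an odd linear isometry from $\RR^2$ onto $V$, so $K$ is oddly homeomorphic to the standard circle $\partial\mathcal{B}(0,r_2)\subset\RR^2$. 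Proposition 2.1 therefore yields $\gamma(K)=2$, and $K$ is compact.

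It remains to show $\sup_{u\in K}J(u)<0$. Using the potential representation and $(\mathcal{H}2)'$, for $u\in K$ we get
\[
F(u)=\int_0^1(A(su),u)\,ds\geq\int_0^1(B_2(su),u)\,ds=\tfrac12(B_2u,u),
\]
hence $J(u)\leq\tfrac12\big((u,u)-(B_2u,u)\big)$. Writing $u=\alpha e_2+\beta e_3$ with $\alpha^2+\beta^2=r_2^2$, and expanding by self-adjointness of $B_2$, we obtain
\[
2J(u)\leq \alpha^2\bigl(1-(B_2e_2,e_2)\bigr)-2\alpha\beta(B_2e_2,e_3)+\beta^2\bigl(1-(B_2e_3,e_3)\bigr).
\]
Set $a=1-(B_2e_2,e_2)$, $c=1-(B_2e_3,e_3)$, $b=-(B_2e_2,e_3)$. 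By $(\mathcal{H}1)'$, $a<0$, $c<0$, and $b^2-ac=(B_2e_2,e_3)^2-(1-(B_2e_2,e_2))(1-(B_2e_3,e_3))<0$. Hence the quadratic form $a\alpha^2+2b\alpha\beta+c\beta^2$ is negative definite on $\RR^2$, so by continuity and compactness it attains a strictly negative maximum on the circle $\alpha^2+\beta^2=r_2^2$. Therefore $\sup_{u\in K}J(u)<0=J(0)$.

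All hypotheses of Clark's theorem (Theorem 2.1) with $k=2$ are met, yielding at least two pairs $\pm u_1,\pm u_2$ of distinct critical points of $J$ with critical values strictly less than $J(0)=0$; consequently none of them is the origin. Since critical points of $J$ are fixed points of $A$, operator $A$ admits two pairs of fixed points in $H\setminus\{0\}$. The main obstacle in this argument is the negative-definiteness verification in the third paragraph: everything downstream is standard, but the algebraic rearrangement showing that $(\mathcal{H}1)'$ is exactly the discriminant condition ensuring $2J|_K<0$ is the crux of the proof and must be carried out carefully using self-adjointness of $B_2$ and the orthonormality of $e_2,e_3$.
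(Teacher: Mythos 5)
Your proposal does not prove the stated theorem; it proves a different result by \emph{assuming} the stated theorem. The statement in question is Clark's theorem itself (Theorem \ref{clarke}): an abstract multiplicity result asserting that any even, bounded-below $C^1$ functional satisfying the (PS) condition, together with a compact symmetric set $K$ of genus $k$ on which $\sup_K J<J(0)$, must possess at least $k$ pairs of critical points with critical values below $J(0)$. Your first sentence announces ``the plan is to apply Clark's theorem with $k=2$,'' and everything that follows is a verification of the hypotheses of that theorem for the functional $J(u)=\frac12\|u\|^2-\int_0^1(A(su),u)\,ds$ under the conditions $(\mathcal{H})$, $(\mathcal{H}1)'$, $(\mathcal{H}2)'$. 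That is precisely the paper's proof of Theorem \ref{theo2} (the second fixed-point theorem), not a proof of Theorem \ref{clarke}. The argument is circular as a proof of the statement you were given.

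A genuine proof of Theorem \ref{clarke} requires the genus-based minimax machinery: one defines the values $c_j=\inf\{\sup_{x\in A}J(x):A\in\Sigma,\ A\ \text{compact},\ \gamma(A)\ge j\}$ for $j=1,\dots,k$, uses the hypothesis on $K$ to show $c_1\le\dots\le c_k\le\sup_K J<J(0)$ and boundedness from below to show $c_j>-\infty$, proves via the equivariant deformation lemma (valid under (PS)) that each $c_j$ is a critical value, and invokes the monotonicity, subadditivity, and continuity properties of the Krasnosel'skii genus to show that if $c_j=c_{j+1}=\dots=c_{j+p}$ then the critical set at that level has genus at least $p+1$ and hence contains infinitely many points, so that in all cases one obtains at least $k$ distinct pairs. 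None of this appears in your proposal. For what it is worth, the paper itself does not reprove Clark's theorem either --- it cites \cite{clark} and \cite{rabin} and remarks that the result follows from a general multiplicity theorem for functionals invariant under a compact group action --- but your text cannot be read as a proof of the statement; it is a (largely correct, and essentially identical to the paper's) proof of Theorem \ref{theo2}.
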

We point out that this result is a consequence of a basic multiplicity theorem involving an invariant functional under the
action of a compact topological group (see \cite{clark, rabin}).

Recall that a mapping is said to be compact if it maps bounded sets onto relatively compact sets. An operator $A: E\longrightarrow E'$ is called a potential operator, if there exists a G\^{a}teaux differentiable functional $T: E\longrightarrow\mathbb{R}$ such that $T'(x)=A(x)$, for every $x\in E$ (see \cite{Ch1979,groesen}), where $E'$ refers to the topological dual of $E$.
 Due to Avez \cite{avez}, we know that, for all $u\in E,$
$$
T(u)=\int_0^1 (A(su),u)_{E',E} \;ds.
$$
 Here $(\cdot,\cdot)_{E',E}$ refers to the duality pairing between $E$ and its topological dual $E'$.

In this paper, we shall assume that $\big(H, (\cdot, \cdot)\big)$ is a Hilbert space with $(\cdot, \cdot)$ denoting the scalar product on $H$ and $\|\cdot\|= \sqrt{(\cdot,\cdot)}$. Let $A$ be a compact operator satisfying $(\mathcal{H})$. Then there exist
 positive constants $c, b$ such that for all $u\in H$, we have
$$
 \|Au\|\leq c \|u\|^{\theta}+b.
$$
Indeed, by $(\mathcal{H})$ there exists some $R>0$ such that for all $u\in H$ with $\|u\|\geq R,$ we have $\|Au\|\leq c\|u\|^{\theta}$.
Let $\mathcal{B}(0,R)\subset H$ be the ball centered at the origin with radius $R.$ Since $A$ is a compact operator, it
follows that $\overline{A(\mathcal{B}(0,R))}$ is compact, whence $A(\mathcal{B}(0,R))$ is bounded, i.e. there exists $b>0$ such that
$$
 \|Au\|\leq b, \;\textrm{for all}\; u\in \mathcal{B}(0,R).
$$
Then Schauder's fixed point theorem applies and yields a solution lying in the ball, which is possibly the trivial fixed point.
On the other hand, our Theorem \ref{thepf} will provide existence of a nontrivial fixed point $u$. Since $A$ is odd, it follows that $-u$ is also a fixed point.

\section{Proof of Theorem~\ref{thepf}}\label{s3}

Consider the functional $J: H\rightarrow\mathbb{R}$ defined by
\begin{equation}\label{funct}
J(u)=\frac{1}{2}\|u\|^2-\int_0^1(A(su),u)\,ds,
\end{equation}
where $Tu=\int_0^1(A(su),u)\,ds.$ Let $X_1=\rm{span} \{e_1\}\subset H$ be the subspace spanned by the vector $e_1$ and consider the set
$$  K =\{u\in X_1: \|u\|=r_1\}
    = \{\lambda e_1: \|\lambda e_1\|=r_1\}
    = \{-r_1e_1, r_1e_1\}.
$$

\textbf{Step 1.}
It is clear that $K$ is symmetric. Since $X_1$ and $\R$ are isomorphic and $K$ and $S^0$ are homeomorphic,
 Proposition \ref{propo} implies that $\gamma(K)=1$; here $S^0$ refers to the
unit sphere in $\mathbb{R}$. Using hypotheses $(\mathcal{H}1)$ and $(\mathcal{H}2),$ 
we get
$$J(-r_1e_1)=J(r_1e_1)
=
\frac{1}{2}r_1^2-\int_0^1\big(A(sr_1e_1),r_1e_1\big)\,ds
$$
$$
\leq\frac{1}{2}r_1^2-\frac{1}{2}r_1^2\big(B_1(e_1),e_1\big)
=\frac{1}{2}\left(1-\big(B_1(e_1),e_1\big)\right)r_1^2<0.
$$
It follows that $$\sup\limits_{K}J(u)=\max\limits_{u\in\{-r_1e_1,r_1r_1\}}J(u)<0=J(0).$$

\textbf{Step 2.}
$J$ is bounded from below. Indeed,
\begin{equation}\label{lbound}
\begin{array}{rll}
  J(u)&\geq&\frac{1}{2}\|u\|^2-\int_0^1\|A(su)\|\|u\|\,ds\\
    &\geq&\frac{1}{2}\|u\|^2-\|u\|\int_0^1(c\|su\|^{\theta}+b)\,ds \\
    &\geq&\frac{1}{2}\|u\|^2-\frac{c}{\theta+1}\|u\|^{\theta+1}-b\|u\|,
    \end{array}
\end{equation}
and our claim follows from the fact that $\theta\in[0,1)$.

\textbf{Step 3.}
First, notice that since $A$ is a potential operator,  there exists a G\^{a}teaux differentiable functional $T: H \longrightarrow\mathbb{R}$ such that $T'=A$. More precisely, $$T(u)=\int_0^1<A(su),u>ds.$$ Hence $J(u)=\frac{1}{2}\|u\|^2-Tu$ and $J'(u)v=(u,v)-(Au,v)$, for all $v\in H$, that is $J'=I-A$.

Moreover, $J$ satisfies the (PS) condition. Indeed, let $(u_n)$ be a sequence in $H$ such that $J'(u_n)\rightarrow 0$ and $J(u_n)$ is bounded. From (\ref{lbound}), we get that $(u_n)$ is bounded from below in $H$. Since $A$ is compact, there exists a subsequence $(u_{n_k})\subset H$ such that $A(u_{n_k})\rightarrow v\in H$. Therefore $u_{n_k}\rightarrow v$ in $H$ for
$$
\!\|u_{n_k}-v\|\leq \|u_{n_k}-A(u_{n_k})\|+\|A(u_{n_k})-v\|=\! \|J'(u_{n_k})\|+\|A(u_{n_k})-v\|,
$$
and the right-hand side tends to $0$, as  $k\rightarrow+\infty$.

By Theorem \ref{clarke}, we conclude that $J$ has a pair of nontrivial critical points which are fixed points for the operator $A$.
This completes the proof of Theorem~\ref{thepf}.
\qed

\section{Proof of Theorem~\ref{theo2}}\label{s4}

We argue as in the proof of Theorem \ref{thepf}. Let $X_2=\rm{span} \{e_2,e_3\}$ be the subspace of $H$ spanned by the vectors $e_2$ and $e_3$ and consider the set:
$$
K'=\{u\in X_2: \|u\|=r_2\}=\{\alpha e_2+\beta e_3: \alpha^2+\beta^2=r_2^2\}.
$$
Clearly, $K'\in\sum$ and the homeomorphism 
$h:K' \to S^1,$ given by
$$ u=\alpha e_2+\beta e_3\mapsto(\frac{\alpha}{\sqrt{\alpha^2+\beta^2}} ,\frac{\beta}{\sqrt{\alpha^2+\beta^2}})$$ is odd. Now, Proposition \ref{propo} guarantees that $\gamma(K')=2$. Here, $S^1$ is the unit sphere in $\mathbb{R}^2$.

For $u\in K'$, assumption $(\mathcal{H}1)'$ now yields
\begin{eqnarray*}
J(u)&=&J(\alpha e_2+\beta e_3)
=\frac{1}{2}r_2^2-\int_0^1\big(A(su),u\big)\,ds\\
&\leq&\frac{1}{2}r_2^2-\int_0^1\big(B_2(su),u\big)\, ds
=\frac{1}{2}r_2^2-\frac{1}{2}\big(B_2(\alpha e_2+\beta e_3),\alpha e_2+\beta e_3\big) \\
&=&\frac{1}{2}r_2^2-\frac{1}{2}\alpha^2\big(B_2(e_2),e_2\big)-\frac{1}{2}\alpha\beta\big(B_2(e_2),e_3\big)
-\frac{1}{2}\alpha\beta\big(B_2(e_3),e_2\big)-\frac{1}{2}\beta^2\big(B_2(e_3),e_3\big)\\
&=&\frac{1}{2}\big(1-\big(B_2(e_2),e_2\big)\big)\alpha^2+\frac{1}{2}\big(1-\big(B_2(e_3),e_3\big)\big)\beta^2-\alpha\beta\big(B_2(e_2),e_3\big).
\end{eqnarray*}
Since the right-hand side term is strictly negative,
it follows  by  compactness of $K'$
 that  $\sup_{K'}J(u)<0=J(0)$.
The rest of the proof parallels that of Theorem \ref{thepf}.
This completes the proof of Theorem~\ref{theo2}.
\qed

\begin{remark}
If we attempt to extend the preceding results to the existence of $n$ pairs of fixed points, then we can consider the $(n-1)$-dimensional subspace
$$
K_n=\{u\in X_n=\langle e_1,e_2,...,e_n\rangle, \|u\|=r\}
=\{\alpha_1 e_1+\alpha_2 e_2+...+\alpha_ne_n, \sqrt{\Sigma_{i=1}^n\alpha_i^2}=r\}.
$$
Clearly, $K$ is symmetric. If we assume that $$\big(A(su),u\big)\geq\big(B(su),u\big),
\
\hbox{ for all}
\
u\in K_n, s\in (0,1),$$ where $B$ a linear self-adjoint operator, the question amounts to finding
 a sufficient condition for $\sup\limits_{K_n} J(u)$ to be negative.

Let $u\in K_n.$ Then, since $B$ is linear and self-adjoint,
we know that
\begin{eqnarray*}
J(u)&=&\frac{1}{2}r^2-\int_0^1\big( A(su),u\big)\,ds
\leq \frac{1}{2} r^2-\frac{1}{2}\big(B(\Sigma_{i=1}^n\alpha_ie_i),\Sigma_{i=1}^n\alpha_ie_i \big)\\
&=&\frac{1}{2} r^2-\frac{1}{2}\Sigma_{i=1}^{n}\Big(\Sigma_{j=1}^{n}\alpha_i\alpha_j(Be_i,e_j)\Big)\\
&=&\frac{1}{2}\Big[\underbrace{\alpha_1^2+\alpha_2^2+...+\alpha_n^2}_{r^2}-\Sigma_{i=1}^{n}\big(\Sigma_{j=1}^{n}\alpha_i\alpha_j(Be_i,e_j)\big].
\end{eqnarray*}
However, we do not know whether the term on the right-hand side is negative.
\end{remark}

 \section{Applications}\label{s5}

 \begin{example}
 Consider the following boundary value problem
\begin{eqnarray}\label{prb}
\left\{
\begin{array}{rll}
-u''(t)&=&f(t,u(t)), \quad t\in [0,1],\\
u(0)=u(1)&=&0,
\end{array}
\right.
\end{eqnarray}
where $f: [0,1]\times\mathbb{R}\rightarrow\mathbb{R}$ is a continuous function.
Clearly, solutions of problem (\ref{prb}) can be obtained as fixed points of the mapping $A$ defined on $H^1_0$ by
\begin{equation}\label{ope}
  Au(t)= \int_0^1G(t,s)f(s, u(s))\,ds,
\end{equation}
where
$$
G(t,s)=
\left\{
\begin{array}{ll}
t(1-s), & t\leq s, \\
s(1-t), & s\leq t
\end{array}
\right.
$$
and $H^1_0$ is the standard Hilbert space with the scalar product $(u,v)=\int_0^1u'(t)v'(t)dt$ and the corresponding norm $\Vert u\Vert=\left(\int_0^1\vert u'(t)\vert^2dt\right)^{1/2}.$
Then $A$ satisfies
\begin{equation}\label{}
  \left\{
  \begin{array}{ll}
    -(Au)''(t)= f(t,u(t)), & \hbox{} \\
    (Au)(0)=(Au)(1)=0. & \hbox{}
  \end{array}
\right.
\end{equation}
 \end{example}
 \begin{remark}
One can check that the operator $A: H^1_0\rightarrow H^1_0$ is compact (see \cite{bouc}).
\end{remark}

Next, we shall give an application to a second-order
boundary value problem.
\begin{theorem}\label{app2}
Assume that $f: [0,1]\times\mathbb{R}\rightarrow\mathbb{R}$ and positive functions $a_i\in L^1(0,1), \;a_i>0,\;\mbox{ a.e. }\,t\in(0,1)\;(i=2,3),$ and $a_1\in L^{\infty}(0,1),$ where $m=\essinf_{(0,1)}a_1>0$ and $M=\esssup_{(0,1)}a_1$, satisfy the following conditions:
\begin{itemize}
\item[$(D_1)$] there exists $r_1\in (0,1)$ such that $\frac{f(t,u)}{u}\geq a_1(t),$ for all $u\in[-r_1, r_1]\setminus\{0\}$,
\item[$(D_2)$] $f(t,u)\leq a_2(t)|u|^{\theta}+a_3(t),$ for all $u\in\mathbb{R}$ and  some $\theta\in [0,1)$,
\item[$(D_3)$] there exist $e_1, e_2\in H^1_0$ with $(e_1,e_2)=0$ and $\|e_1\|=\|e_2\|=1$ such that $m|e_i|_{L^2}>1$, $i=1,2$,
\item[$(D_4)$] $M^2+2\pi^2 m<\pi^4+m^2$.
\end{itemize}
Then problem (\ref{prb}) has two pairs of nontrivial solutions in $H^1_0$.
\end{theorem}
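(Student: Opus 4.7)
The plan is to verify that the solution operator $A$ defined by (\ref{ope}) meets all the hypotheses of Theorem~\ref{theo2}, and then invoke that theorem. Several items are essentially routine: compactness of $A: H^1_0 \to H^1_0$ is already cited from \cite{bouc}; oddness of $A$ follows once we interpret $(D_1)$--$(D_2)$ as implicitly assuming $f(t,\cdot)$ is odd (otherwise the hypothesis of Theorem~\ref{theo2} cannot be applied); the potential structure is obtained either via the Avez formula or directly by checking that $T(u) = \int_0^1 F(t,u(t))\,dt$ with $F(t,u) = \int_0^u f(t,\tau)\,d\tau$ satisfies $T'(u) = A(u)$; and the sublinear growth $(\mathcal{H})$ follows from $(D_2)$ together with the embedding $H^1_0(0,1) \hookrightarrow L^\infty(0,1)$ (with the sharp inequality $\|u\|_\infty \leq \|u\|_{H^1_0}$).

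The real work is in producing the linear self-adjoint operator $B_2$ required by $(\mathcal{H}1)'$ and $(\mathcal{H}2)'$. I would choose
\[
B_2 v(t) = \int_0^1 G(t,s)\, a_1(s)\, v(s)\, ds,
\]
i.e.\ the solution operator of the linearized problem $-(B_2 v)'' = a_1 v$, $(B_2 v)(0) = (B_2 v)(1) = 0$. Integration by parts shows that
\[
(B_2 v, w)_{H^1_0} = \int_0^1 a_1(t) v(t) w(t)\, dt,
\]
which is symmetric in $v, w$, so $B_2$ is self-adjoint on $H^1_0$. For $(\mathcal{H}2)'$, pick $r_2 \in (0, r_1]$; then for $u$ with $\|u\|_{H^1_0} = r_2$ and $s \in (0,1)$ the embedding gives $|su(t)| \leq s r_2 < r_1$ pointwise, so $(D_1)$ yields $f(t, su(t))\, u(t) \geq s\, a_1(t) u(t)^2$ for a.e.\ $t$. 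Integrating and using the identity $(A(su), u)_{H^1_0} = \int_0^1 f(t, su(t))\, u(t)\, dt$ (obtained by integrating $-(A(su))'' = f(\cdot, su)$ against $u$) produces $(A(su), u) \geq (B_2(su), u)$.

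It remains to verify $(\mathcal{H}1)'$ for the pair $e_1, e_2$ supplied by $(D_3)$, and this is the main obstacle. The quantity $(B_2 e_i, e_j) = \int_0^1 a_1 e_i e_j\, dt$ must be estimated using only $m \leq a_1 \leq M$ and the Poincaré bound $\|e_i\|_{L^2} \leq 1/\pi$ (since $\|e_i\|_{H^1_0} = 1$). The first inequality $(B_2 e_i, e_i) > 1$ is meant to follow from $(D_3)$ via $(B_2 e_i, e_i) \geq m \|e_i\|_{L^2}^2$. For the discriminant condition, I would bound $(B_2 e_1, e_2)^2 \leq M^2 \|e_1\|_{L^2}^2 \|e_2\|_{L^2}^2$ by Cauchy--Schwarz and use the lower estimates on $(B_2 e_i, e_i) - 1$ to convert $(D_4)$, which is precisely an inequality of the form $M^2 - (\pi^2 - m)^2 < 0$, into the required strict sign. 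This is the step that ties together all four hypotheses $(D_1)$--$(D_4)$, and the algebraic manipulation must be performed carefully to ensure every inequality has the right direction.

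Once $(\mathcal{H})$, $(\mathcal{H}1)'$, and $(\mathcal{H}2)'$ are established, Theorem~\ref{theo2} delivers two pairs of nontrivial fixed points of $A$ in $H^1_0 \setminus \{0\}$; by construction of $A$, these are two pairs of nontrivial weak (hence classical, by standard regularity for the ODE) solutions of (\ref{prb}), completing the proof.
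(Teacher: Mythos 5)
Your proposal follows essentially the same route as the paper's proof: the same Green-operator choice $B_2v(t)=\int_0^1G(t,s)a_1(s)v(s)\,ds$, the same integration-by-parts identities $(B_2v,w)=\int_0^1 a_1vw\,dt$ and $(A(su),u)=\int_0^1 f(t,su)u\,dt$, the same use of $(D_1)$ for $(\mathcal{H}2)'$, of $(D_2)$ plus the embedding for $(\mathcal{H})$, and of Cauchy--Schwarz, Poincar\'e, and $(D_4)=\{M^2<(\pi^2-m)^2\}$ for the discriminant condition in $(\mathcal{H}1)'$. The outline is correct (and your explicit flagging of the implicit oddness of $f(t,\cdot)$ is a point the paper leaves unstated), so nothing further is needed beyond carrying out the sign-checking in the discriminant estimate exactly as you describe.
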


\begin{proof}
Let
\begin{equation}\label{Jf}
J(u)=\frac{1}{2}\Vert u\Vert^2-\int_0^1F(t,u)dt,
\end{equation}
where $F(t,u)=\int_{0}^{u}f(t,v)dv$. Using ($D_1$) together with the Lebesgue dominated convergence theorem, we can prove that $J\in C^1(H^1_0,\mathbb{R})$ and that for all $u,v\in H^1_0,$
\begin{eqnarray*}
  J'(u)(v) &=& \int_0^1u'v'\,dt-\int_0^1f(t,u(t))v(t)\,dt
           =\int_0^1u'v'\,dt+\int_0^1 (Au)''(t)v(t)\,dt\\
           &=&  \int_0^1(u'v'-(Au)'(t))v'(t)\,dt
           = (u,v)-(Au,v).
\end{eqnarray*}
Hence $A$ is a potential operator. Define
\begin{equation}\label{opB}
Bu(t)=\int_0^1G(t,s)a_1(s)u(s)\,ds.
\end{equation}
We shall apply  Theorem \ref{theo2}. The functional $J$ and operator $B$ are as given by (\ref{Jf}) and (\ref{opB}).
By ($D_3$), for $i=1, 2$, we have
\begin{eqnarray*}
\big(B(e_i),e_i\big) &=& \int_0^1\big(B(e_i)\big)'(t)_i'(t)dt
=-\int_0^1\big(B(e_i)\big)''(t)e_i(t)dt\\
&=&\int_0^1a_1(t)e_i^2(t)dt
\geq m|e_i|^2_{L^2}> 1.
\end{eqnarray*}

Using the Cauchy-Schwarz and the Poincar\'{e} inequalities together with assumption ($D_4$), we get the estimates
\begin{eqnarray*}
&&\big(B(e_1),e_2\big)^2-\Big(1-\big(B(e_1),e_1 \big)\Big)\Big(1-\big(B(e_2),e_2 \big)\Big)\\
\\
&=&\Big(\int_0^1\big(B(e_1)\big)'(t)e'_2(t)dt\Big)^2
-\Big(1-\int_0^1\big(B(e_1)\big)'(t)e'_1(t)dt\Big)\Big(1-\int_0^1\big(B(e_2)\big)'(t)e'_2(t)dt\Big)\\
\\
&=&\Big(\int_0^1\big(B(e_1)\big)''(t)e_2(t)dt\Big)^2
-\Big(1+\int_0^1\big(B(e_1)\big)''(t)e_1(t)dt\Big)\Big(1+\int_0^1\big(B(e_2)\big)''(t)e_2(t)dt\Big)\\
\\
&=&\Big(\int_0^1a_1(t)e_1(t)e_2(t)dt\Big)^2-\Big(1-\int_0^1a_1(t) e^2_1(t)dt\Big)\Big(1-\int_0^1(a_1(t)e^2_2(t)dt\Big)\\
\\
&\leq& M^2|e_1|^2_{L^2}|e_2|^2_{L^2}-\big(1-m|e_1|^2_{L^2}\big)\big(1-m|e_2|^2_{L^2}\big)\\
\\
&\leq& M^2|e_1|^2_{L^2}|e_2|^2_{L^2}-1+m|e_2|^2_{L^2}+m|e_1|^2_{L^2}-m^2|e_1|^2_{L^2}|e_2|^2_{L^2}\\
\\
&\leq&\frac{1}{\lambda_1^2}M^2+\frac{2}{\lambda_1}m-1-\frac{1}{\lambda_1^2}m^2=\frac{1}{\pi^4}M^2+\frac{2}{\pi^2}m-1-\frac{1}{\pi^4}m^2
<0,
\end{eqnarray*}
where $\lambda_1=\pi^2$ is the first eigenvalue of the linear Dirichlet problem:
$$
\left\{
\begin{array}{ll}
-u''(t)=\lambda u(t), &t\in [0,1], \\
u(0)=u(1)=0.
\end{array}
\right.
$$
Let $u\in H^1_0$ with $\|u\|=r_1$ and $s\in(0,1)$. Since $$|u|_{\infty}\leq \|u\|=r_1,$$ i.e., $-r_1\leq u\leq r_1$, we have, by $(D_1)$,
\begin{eqnarray*}
\big(A(su),u\big)-\big(B(su),u\big)&=&-\int_0^1\big(A(su)\big)''(t)u(t)\,dt+\int_0^1\big(B(su)\big)''(t)u(t)\,dt\\
&=&\int_0^1\frac{f(t,su(t))}{u(t)} u^2(t)\,dt-\int_0^1 a_1(t)sre^2(t)\,dt\\
&=&\int_0^1\left(\frac{f(t,su(t))}{u(t)}-a_1(t)\right) u^2(t)\,dt\geq 0.
\end{eqnarray*}
We can now deduce that conditions $(\mathcal{H}_1)'$ and $(\mathcal{H}_2)'$ are satisfied.
Regarding condition $(\mathcal{H})$, it is easy to see that it follows from condition $(D_2).$ Indeed, since the embedding $H^1_0\hookrightarrow C$ is continuous, we have
\begin{eqnarray*}
\|Au\|&=&\sup_{\|v\|_{H^1_0}\leq 1}|(Au,v)|
=\sup_{\|v\|_{H^1_0}\leq 1}\left\vert\int_0^1(Au)'(t)v'(t)\,dt\right\vert\\
&=&\sup_{\|v\|_{H^1_0}\leq 1}\left\vert\int_0^1-(Au)''(t)v(t)\,dt\right\vert
\leq\sup_{\|v\|_{H^1_0}\leq 1}\int_0^1|f(t,u(t)) v(t)|\,dt\\
&\leq&c\int_0^1(a_2(t)|u(t)|^{\theta}+a_3(t))\,dt
\leq c\|u\|^{\theta} \int_0^1a_2(t)\,dt+\int_0^1a_3(t)\,dt.
\end{eqnarray*}
Letting $c_1=\int_0^1a_2(t)\,dt$ and $c_2=\int_0^1a_3(t)\,dt$, we obtain
$$
\limsup_{\|u\|\rightarrow+\infty}\frac{\|Au\|}{\|u\|^{\theta}}<\infty,
$$
hence $(\mathcal{H})$ is satisfied.

Finally, Theorem \ref{theo2} guarantees that  operator $A$ has two pairs of fixed points in $H^1_0$. As a consequence, problem (\ref{prb}) admits two pairs of nontrivial solutions in $H^1_0$.
This completes the proof of Theorem~\ref{app2}.
\end{proof}
\begin{example}
For any $\theta\in[0,1)$ and  $r_1, t\in (0,1)$,
consider the following function
\begin{equation*}
f(t,u)=
    \left\{
      \begin{array}{ll}
        a_2(t) u^{\theta}, & \hbox{if $u\geq0$,} \\
        -a_2(t) (-u)^{\theta}, & \hbox{if $u<0$,}
      \end{array}
    \right.
\end{equation*}
where  $a_i,$ $ i\in\{1,2,3\},$ are defined as follows
\begin{itemize}
\item $a_1(t)=1+t$,
\item  $a_2(t)= r_1^{1-\theta}a_1(t),$  
\item $a_3(t)= t.$
\end{itemize}
Then function $f$ satisfies the hypotheses of Theorem \ref{app2}.
\end{example}

\subsection*{Acknowledgements}
Repov\v{s} was supported by the Slovenian Research and Innovation Agency grants P1-0292, N1-0278, N1-0114, J1-4031, J1-4001, and N1-0083.
We thank the anonymous referees for their constructive comments and suggestions.

\end{document}